\newtheorem{Th}{Theorem}[section]
\newtheorem{Lm}{Lemma}[section]
\newcommand{\bc}{\begin{center}}
\newcommand{\ec}{\end{center}}
\newcommand{\be}{\begin{eqnarray}}
\newcommand{\ee}{\end{eqnarray}}
\newcommand{\ben}{\begin{eqnarray*}}
\newcommand{\een}{\end{eqnarray*}}
\newcommand{\Om}{{\rm\Omega}}
\newcommand{\dx}{\,dx}
\newcommand{\ds}{\,ds}
\newcommand{\cE}{\mathcal{E}}
\newcommand{\cT}{\mathcal{T}}
\newcommand{\R}{\mathbb{R}}
\begin{document}

%%%%%%%%%%%%%%%%%%%%%%%%%%%%%%%%%%%%%%%%%%%%%%%%%%%%%%%%%%%%%%%%%%%%%%%%%%

\title{
A Penalized Crouzeix-Raviart element method for Second Order Elliptic eigenvalue problems
}
%\subtitle{Do you have a subtitle?\\ If so, write it here}

%\titlerunning{Short form of title}        % if too long for running head
\author {Jun Hu}
\address{LMAM and School of Mathematical Sciences, Peking University,
  Beijing 100871, P. R. China.  hujun@math.pku.edu.cn}

%\authorrunning{Short form of author list} % if too long for running head
\author{Limin Ma}
\address{LMAM and School of Mathematical Sciences, Peking University,
  Beijing 100871, P. R. China. maliminpku@gmail.com}
\thanks{The first author was supported by  NSFC
projects 11271035, 91430213 and 11421101}

\maketitle
\begin{abstract}
In this paper we propose a penalized Crouzeix-Raviart element method for eigenvalue problems of second order elliptic operators. The key idea is to add a penalty term to tune the local approximation property and the global continuity property of the discrete eigenfunctions. The feature of this method is that by adjusting the penalty parameter, the resulted discrete eigenvalues can be in a state of "chaos", and consequently a large portion of them can be reliable and approximate the exact ones with high accuracy. Furthermore, we design an algorithm to select such a quasi-optimal penalty parameter. Finally, we provide numerical tests to demonstrate the performance of the proposed method.

  \vskip 15pt

\noindent{\bf Keywords. }{eigenvalue problem, Penalized Crouzeix-Raviart element method, Crouzeix-Raviart element method}

 \vskip 15pt

\noindent{\bf AMS subject classifications.}
    { 65N30, 73C02.}

\end{abstract}

\section{Introduction}
Finding eigenvalues of operators is important in the mathematical science. Many numerical methods have been used to approximate  eigenvalue problems of partial differential operators, such as finite differences methods, finite element methods and spectral methods. As pointed out in \cite{spectral}, spectral methods can perform extremely well when they are used to approximate eigenvalue problems. Especially, for the 1-D Laplacian operator, two thirds of numerical eigenvalues can be accurate if the Chebyshev pseudo-spectral method is used. However, it is well-known that only a small portion of numerical eigenvalues can be reliable when finite differences and finite element methods are applied. Recently \cite{zhangzhimin} puts forward a quantitative measurement criteria about the number of "trusted" eigenvalues by the finite element approximation of $2m$-th order elliptic eigenvalue problems. In particular, it points out that for 2-D second order elliptic problems, only some earlier eigenvalues can be approximated at a quadratic convergence rate if the linear element is used. Thus, it is not easy for finite element methods to perform well for a large amount of eigenvalues.

The purpose of this paper is to improve accuracy for a large number of eigenvalues. The theories from \cite{AD04,HuHuangShen2014,YangZhangLin2010,huhuanglin} indicate that under certain conditions, the Crouzeix-Raviart element method (CR element method for short hereinafter) produces lower bounds of eigenvalues. The idea herein is to add a penalty term to the discrete bilinear form of the Crouzeix-Raviart element method of second order elliptic eigenvalue problems. We call the resulted method the Penalized Crouzeix-Raviart element method( PCR element method for short hereinafter).  Such a penalty term is able to adjust the continuity of discrete eigenfunctions.  In fact, when the penalty parameter tends to infinity, the discrete eigenfunctions tend to those by the conforming linear element method, which produces upper bounds of eigenvalues. In other words, if the penalty parameter of the PCR element method tends to infinity, the PCR element method becomes the conforming linear element method. This and the theories \cite{AD04,HuHuangShen2014,YangZhangLin2010,huhuanglin} imply that on a given mesh, for some eigenvalue, the exact one can be obtained by appropriately choosing the penalty parameter. Hence,  the PCR element method is  able to  improve accuracy for eigenvalues by tuning its penalty parameter. In particular, if its penalty parameter is selected such that some approximate eigenvalues are lower bounds, and the others are upper bounds  of the exact ones( we call such a state as a "chaos" state), it can be believed that a large amount of approximate eigenvalues admit high accuracy.

In order to accomplish a scheme with approximate eigenvalues in a "chaos " state, we propose an algorithm for designing the penalty parameter. The  main idea is to use the monotonicity property of approximate eigenvalues  on several coarse meshes.
This enables us to  find an interval  such that the PCR element methods taking its two endpoints as the penalty parameters produce lower bounds and upper bounds of eigenvalues, respectively. Then we use a method of bisection to obtain a penalty
parameter such that the approximate eigenvalues are  in a "chaos " state,  see Algorithm \ref{alg:bisection} for more details.  In the following, we call such a parameter a quasi-optimal penalty parameter.

In the case that the aforementioned interval is not found after several elaborate guesses of the penalty parameter are sampled, a quadratic function of the penalty parameter is constructed to fit the average ratio of differences between corresponding approximate eigenvalues on two successive meshes. If there exists a zero of this function, this zero is taken as the penalty parameter; otherwise, the maximum of the intercepts with the $\gamma$-axis of the tangents to this function at the guess points  is taken as a further candidate parameter.

This paper is organized as follows. In Section \ref{notation}, the second order elliptic eigenvalue problem and  the PCR element method are presented, and  some notation are also given. In Section \ref{sectionalg}, an algorithm is proposed to obtain  a quasi-optimal  penalty parameter.  In  Section \ref{sec:numericaltest}, some numerical tests are presented to illustrate the PCR element method.

\section{Notation and Preliminaries}\label{notation}
\subsection{Notation}
Throughout this paper, the standard space, norm, and inner product notation are adopted. Their definitions can be found in \cite{BrennerScott1996}. Suppose that $\Om\in \mathbb{R}^d, d=2 \text{ or }3$ is covered exactly by a shape-regular partition $\cT_h$ into simplices. Denote the set of all interior $(d-1)$-face and boundary $(d-1)$-face of $\cT_h$ by $\cE_h^I$ and $\cE_h^b$, respectively, and $\cE_h=\cE_h^I\cup \cE_h^b$. Let $[\cdot]$ be jumps of piecewise functions over $(d-1)$-face $e$ and $\{\cdot \}$ be averages, namely
	\be
	[v]|_e\ :=\
	\begin{cases}
		v|_K-v|_{K'} , & \text{if the global label of K is bigger,}\cr v|_{K'}-v|_{K} , & \text{if the global label of K' is bigger.}
	\end{cases}
	\ee
$$ \{v\}|_e:=\ \frac{1}{2}(v|_K+v|_{K'})$$
for a piecewise function $v$ and $e=K\cap K'$.

Let $h_K$ denote the diameter of an element $K\in \cT_h$ and $h=\max_{K\in\cT_h}h_K$. Assume that $h_K \simeq h$.  For $x\in K\subset\R^d,\ d=2,3,\ r\in \mathbb{Z}^+$, let $$P_r(K):=\ \sum_{|\alpha|\leq r}c_{\alpha}x^{\alpha}.$$
Throughout the paper, we shall use the symbol $A\lesssim B$ to denote that $A\leq CB$, where $C$ is a positive constant.
\subsection{Penalized Crouzeix-Raviart element method}
Let $\Om\in \R^d,\ d=2,3$ be a bounded polyhedral domain. We consider the following eigenvalue problems of second order elliptic operators:
\begin{align}
-\triangledown \cdot (a\ \triangledown \ u)\ +\ cu\ &=\ \lambda u \qquad\text{in}\ \Om,\label{generaleq01}\\
u\ &=\ 0\qquad\text{on}\ \partial{\Om}.\label{generaleq02}
\end{align}

Then the continuous problem \eqref{generaleq01}-\eqref{generaleq02} can be written in a weak form : Seek $(\lambda,u)\  \in \mathbb{R} \times V$ with $\parallel u \parallel_{0,\Om}\ =\ 1$ such that
	\be\label{conEq}
	a(u,v)\ =\ \lambda (u,v)\ ,\ \forall v \in V,
	\ee
	where $V\ = H^1_0(\Om)$ and the bilinear form is defined as
	\be
	a(u,v)\ =\  \int_{\Om}\ a\triangledown u\cdot \triangledown v \ +\ cuv\dx.
	\ee
The bilinear form $a(u,v)$ is symmetric, bounded, and coercive in the following sense:
\be\label{coeriveCond}
a(u,v)=a(v,u),\ |a(u,v)|\lesssim \parallel u\parallel_{1,\Om}\parallel v\parallel_{1,\Om},\ \parallel v\parallel_{1,\Om}^2\lesssim a(v,v)\forall u,v \in V.
\ee

Under the conditions \eqref{coeriveCond}, the eigenvalue problem \eqref{conEq} has a sequence of eigenvalues
$$0<\lambda_1\leq \lambda_2\leq \lambda_3\leq ...\nearrow +\infty.$$
They satisfy the well-known minimum-maximum principle:
\be\label{minmax}
\lambda_k=\min_{dim V_k=k,V_k\subset V} \max_{v\in V_k, v\neq 0}\frac{a(v,v)}{(v,v)}.
\ee

To improve accuracy for a large amount of eigenvalues by balancing the local approximation property and the global continuity property of discrete eigenfunctions, we propose a penalized Crouzeix-Raviart element method. The discrete space $V_h$ is the standard CR element space, i.e.
\begin{equation}\label{disV}
	\begin{split}
	V_h\ :\  =& \{\  v\in L^2(\Om)  : \  v|_K\  \in \  P_1(K) \text{ for each }K\  \in \cT_h,\int_e[v]\ds \ =\ 0,\ \forall e\in \cE_h^I,\\
      &\int_e v\ds \ =\ 0,\ \forall e\in \cE_h^b \}.\\
	\end{split}
\end{equation}
The bilinear form $\ a_h^{\gamma}:V_h\ \times \ V_h \rightarrow \mathbb{R}$ is defined by:
\be\label{ah}
	a_h^{\gamma}(u,v)\  :=\  \sum_{K\in \cT_h} \int_Ka\triangledown u\cdot \triangledown v+\ cuv \dx +\gamma \sum_{e\in \cE_h^I} \frac{1}{|e|^{2-d/2}}\int_e [u][v]\ds,
	\ee
where the gradient operator $\nabla$ is defined elementwise. The second term in equation \eqref{ah} is the so-called penalty term and $\gamma\geq 0$ is a penalty parameter.

The corresponding finite element approximation of problem \eqref{conEq} is : Find $(\lambda_h^{\gamma},\ u_h^{\gamma}) \in \R\rm \times \rm V_h$, such that
\be\label{disEq}
a_h^{\gamma}(u_h^{\gamma},v)\  =\  \lambda_h^{\gamma} (u_h^{\gamma},v)\quad \text{with} \quad \parallel u_h ^{\gamma}\parallel_{0,\Om}\ =\ 1\quad \forall v\in V_h.
\ee	

If the parameter $\gamma=0$, the PCR element method reduces to the CR element method, and will approximate the exact eigenvalues from below under some conditions, see, for instance, \cite{AD04,HuHuangShen2014,YangZhangLin2010,huhuanglin}. It is shown by the numerical tests in Section \ref{sec:2dtest} that the PCR element method is insensitive to penalty parameters, namely, the PCR element method with the penalty parameter near to the optimal one is able to  equally achieve high accuracy for eigenvalues.

Next, we list two properties of the PCR element method in the following theorems.

\begin{Th}\label{monotone}
Let $\lambda_i$ be the $i$-th eigenvalue of the problem \eqref{conEq}, $\lambda_{i,h}^{\gamma}$ be the approximation to $\lambda_i$ by the PCR element method with  the penalty parameter $\gamma$, then $\lambda_{i,h}^{\gamma}$ is monotonically increasing along with the penalty parameter $\gamma$.
\end{Th}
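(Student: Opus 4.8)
The plan is to derive a discrete Courant--Fischer characterization for the PCR eigenvalues and then exploit the fact that the penalty term in \eqref{ah} is a nonnegative quadratic form that grows linearly with $\gamma$. First I would record that the discrete bilinear form $a_h^{\gamma}$ is symmetric on $V_h\times V_h$ and, for every $\gamma\ge 0$, positive definite on $V_h$: symmetry is immediate from \eqref{ah} and the symmetry of $a(\cdot,\cdot)$ in \eqref{coeriveCond}, while positive definiteness follows from coercivity of the elementwise part (a broken Poincar\'e inequality on the Crouzeix--Raviart space $V_h$ handles the $\gamma=0$ case) together with the nonnegativity of the penalty contribution. Consequently the generalized eigenvalue problem \eqref{disEq} admits real eigenvalues together with an $(\cdot,\cdot)$-orthonormal basis of eigenfunctions, so the finite-dimensional analogue of \eqref{minmax} holds:
$$\lambda_{i,h}^{\gamma}=\min_{\substack{V_i\subset V_h\\ \dim V_i=i}}\ \max_{\substack{v\in V_i\\ v\neq 0}}\frac{a_h^{\gamma}(v,v)}{(v,v)}.$$

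The key estimate is the monotonicity of the form along the diagonal. For any $v\in V_h$ and any $0\le\gamma_1\le\gamma_2$,
$$a_h^{\gamma_2}(v,v)-a_h^{\gamma_1}(v,v)=(\gamma_2-\gamma_1)\sum_{e\in\cE_h^I}\frac{1}{|e|^{2-d/2}}\int_e[v]^2\ds\ \ge\ 0,$$
since every summand is a square integrated over $e$. Dividing by $(v,v)>0$ shows that the Rayleigh quotient $a_h^{\gamma}(v,v)/(v,v)$ is nondecreasing in $\gamma$ for each fixed $v\neq 0$.

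Finally I would propagate this pointwise inequality through the two optimizations in the min--max formula. Fixing any $i$-dimensional subspace $V_i\subset V_h$ and taking the maximum over $v\in V_i\setminus\{0\}$ preserves the inequality $a_h^{\gamma_1}(v,v)/(v,v)\le a_h^{\gamma_2}(v,v)/(v,v)$; then taking the minimum over all such $V_i$ preserves it as well, which yields $\lambda_{i,h}^{\gamma_1}\le\lambda_{i,h}^{\gamma_2}$, exactly the asserted monotonicity. I expect the only genuine obstacle to be the first step, namely verifying that \eqref{disEq} really falls under the Courant--Fischer framework for all $\gamma\ge 0$, i.e. that $a_h^{\gamma}$ remains symmetric positive definite on $V_h$ so the min--max principle is available; once that is in place, the remaining monotonicity argument is routine.
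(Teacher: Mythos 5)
Your proposal is correct and follows essentially the same route as the paper: both arguments combine the discrete minimum--maximum principle with the observation that the Rayleigh quotient $a_h^{\gamma}(v,v)/(v,v)$ is nondecreasing in $\gamma$ for each fixed $v\neq 0$, and then pass this inequality through the $\min$--$\max$. The only difference is one of explicitness --- you spell out the positive definiteness of $a_h^{\gamma}$ (which the paper records separately as property (H1)) and the exact formula for $a_h^{\gamma_2}(v,v)-a_h^{\gamma_1}(v,v)$, both of which the paper leaves implicit in its one-line inequality.
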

\begin{proof}
Suppose $0\leq \gamma_1<\gamma_2$, the corresponding sequences of discrete eigenvalues are denoted by $\{\lambda^{\gamma_1}_i\}_{i=1}^N$ and $\{\lambda^{\gamma_2}_i\}_{i=1}^N$, where $N=\text{dim} V_h$. Then, according to the discrete minimum-maximum principle,
\be
\lambda_k^{\gamma_1}=\min_{\text{dim} V_k=k,V_k\subset V_h} \max_{v\in V_k, v\neq 0}\frac{a_h^{\gamma_1}(v,v)}{(v,v)}\leq \min_{\text{dim} V_k=k,V_k\subset V_h} \max_{v\in V_k, v\neq 0}\frac{a_h^{\gamma_2}(v,v)}{(v,v)}=\lambda_k^{\gamma_2}
\ee
for any positive integer $k\leq N$, which completes the proof.
\end{proof}

\begin{Th}\label{infityconforming}
Let $\lambda_i$ be the $i$-th eigenvalue of the problem \eqref{conEq}, $\lambda_{i,h}^{\gamma}$ be the approximation to $\lambda_i$ by the PCR element method with  the penalty parameter $\gamma$, $\lambda_{i,h}^{P_1}$ be the approximation to $\lambda_i$ by the conforming linear method. Then
\begin{equation}
\lim_{\gamma\rightarrow +\infty} \lambda_{i,h}^{\gamma}=\begin{cases}
\lambda_{i,h}^{P_1} & i\leq \text{ dim }V_h^c\\
+\infty & i>\text{ dim }V_h^c
\end{cases},
\end{equation}
where $$V_h^c=\{v\in L^2(\Om): v|_K \in P_1(K), \text{for any }K\in\cT_h, v\text{ is continuous on interior faces}, v|_{\partial \Om}=0 \}$$ is the conforming linear element space.
\end{Th}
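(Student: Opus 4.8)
The plan is to read the discrete problem \eqref{disEq} as a generalized matrix eigenvalue problem $(A+\gamma B)x=\lambda Mx$ on the finite dimensional space $V_h$, where $A$, $B$ and $M$ are the symmetric matrices associated with $a_h^0(\cdot,\cdot)$, with the penalty form $b(u,v):=\sum_{e\in\cE_h^I}\tfrac{1}{|e|^{2-d/2}}\int_e[u][v]\ds$, and with the $L^2(\Om)$ inner product, respectively. Here $M$ is positive definite, $A$ is bounded below, and $B$ is symmetric positive semidefinite, so the statement is a stiff-penalty limit whose behavior is dictated by the kernel
\[
Z:=\{v\in V_h:b(v,v)=0\}=\{v\in V_h:[v]|_e=0\ \ \forall e\in\cE_h^I\}.
\]
The first step is therefore to identify $Z$ with the conforming space $V_h^c$: a piecewise $P_1$ function whose jumps vanish across every interior face is globally continuous, and on such a $v$ one has $a_h^0(v,v)=\int_{\Om}a\triangledown v\cdot\triangledown v+cv^2\dx=a(v,v)$, so the problem $a_h^0(u,v)=\mu(u,v)$ restricted to $Z$ is exactly the conforming linear element eigenproblem. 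Granting $Z=V_h^c$, its eigenvalues are $\mu_i=\lambda_{i,h}^{P_1}$ and $m:=\dim Z=\dim V_h^c$.

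The blow-up case $i>\dim V_h^c$ then follows from the discrete min-max principle of Theorem \ref{monotone}. By Courant-Fischer the $i$-th generalized eigenvalue $\nu_i$ of $B$ satisfies $\nu_i>0$ for $i>m$, and since $a_h^0(v,v)\geq -C(v,v)$ for some constant $C\geq0$,
\[
\lambda_{i,h}^\gamma=\min_{\dim S=i}\ \max_{0\neq v\in S}\frac{a_h^0(v,v)+\gamma b(v,v)}{(v,v)}\ \geq\ -C+\gamma\,\nu_i\ \longrightarrow\ +\infty.
\]
For the bounded case $i\leq m$, restricting the outer minimum to $i$-dimensional subspaces of $Z$, on which $b$ vanishes, gives $\lambda_{i,h}^\gamma\leq\mu_i=\lambda_{i,h}^{P_1}$; combined with the monotonicity already proved in Theorem \ref{monotone}, the limit $\lambda_{i,h}^{*}:=\lim_{\gamma\to\infty}\lambda_{i,h}^\gamma$ exists and satisfies $\lambda_{i,h}^{*}\leq\lambda_{i,h}^{P_1}$.

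The crux is the matching lower bound $\lambda_{i,h}^{*}\geq\lambda_{i,h}^{P_1}$, which I would establish by a concentration argument on the eigenfunctions. Normalizing $\|u_{i,h}^\gamma\|_{0,\Om}=1$ and rearranging \eqref{disEq} gives $\gamma\,b(u_{i,h}^\gamma,u_{i,h}^\gamma)=\lambda_{i,h}^\gamma-a_h^0(u_{i,h}^\gamma,u_{i,h}^\gamma)\leq\lambda_{i,h}^{*}+C$, so $b(u_{i,h}^\gamma,u_{i,h}^\gamma)=O(1/\gamma)\to0$ and every limit point $u_{i,h}^{*}$ (extracted for $i=1,\dots,m$ by finite dimensional compactness) lies in $Z$. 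Since $b(\cdot,v)\equiv0$ whenever $v\in Z$, testing \eqref{disEq} against $v\in Z$ and letting $\gamma\to\infty$ yields $a_h^0(u_{i,h}^{*},v)=\lambda_{i,h}^{*}(u_{i,h}^{*},v)$, so each $u_{i,h}^{*}$ is an eigenfunction of the conforming problem on $Z$. The $M$-orthonormality of the family $\{u_{i,h}^\gamma\}$ passes to the limit, so $\{u_{i,h}^{*}\}_{i=1}^{m}$ is an orthonormal eigenbasis of $Z$; comparing the sorted limits with the sorted eigenvalues $\mu_1\leq\cdots\leq\mu_m$ then forces $\lambda_{i,h}^{*}=\mu_i=\lambda_{i,h}^{P_1}$, and monotonicity upgrades the subsequential convergence to convergence of the whole family. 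The hard part is exactly this lower bound — in particular, showing that the limits $u_{i,h}^{*}$ stay linearly independent and hence exhaust $Z$ instead of collapsing — together with the careful identification of $Z$ with $V_h^c$, which must reconcile the integral boundary constraint $\int_e v\ds=0$ built into $V_h$ with the pointwise condition $v|_{\partial\Om}=0$ defining $V_h^c$.
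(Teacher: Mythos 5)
Your route is genuinely different from the paper's. The paper stays entirely inside the discrete min--max principle: it splits the family of $k$-dimensional subspaces of $V_h$ into those contained in $V_h^c$ and the rest, argues that the Rayleigh-quotient maximum blows up on every subspace of the second kind, and that it is $\gamma$-independent on subspaces of the first kind. You instead work with the pencil $(A+\gamma B,M)$, obtain the blow-up for $i>\dim Z$ from Courant--Fischer applied to the penalty form $b$ alone, and obtain convergence for $i\le\dim Z$ from monotonicity plus compactness of $M$-orthonormal eigenvectors. Your treatment of the blow-up case is in fact \emph{tighter} than the paper's: the paper only observes that the quotient blows up on each fixed nonconforming subspace and then takes a minimum over infinitely many such subspaces, which is a pointwise-to-uniform leap; your uniform bound $\lambda_{i,h}^\gamma\ge -C+\gamma\nu_i$ with $\nu_i>0$ is exactly the statement the paper needs. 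Also, your worry about the limits $u_{i,h}^{*}$ collapsing is already answered by your own argument, since $M$-orthonormality passes to the limit in finite dimensions.

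The step you ``grant,'' however --- the identification $Z=V_h^c$ --- is a genuine gap, and it cannot be closed as stated, because it is false in general. A function $v\in V_h$ lies in $Z$ iff its jumps across \emph{interior} faces vanish, i.e.\ iff it is continuous; the CR boundary condition then only enforces $\int_e v\,ds=0$ on each boundary face, not $v|_{\partial\Om}=0$. Such functions exist: on the uniform triangulation of the unit square, assign alternating values $\pm a$ to the boundary vertices (their number is even, so the assignment is consistent around the boundary), zero to all interior vertices, and extend piecewise linearly. This $v$ is continuous and has zero mean on every boundary edge, hence $v\in V_h$ and $v\in Z$, yet $v\notin V_h^c$; so $\dim Z>\dim V_h^c$, and your (correct) stiff-penalty analysis shows that $\lambda_{i,h}^\gamma$ converges to the $i$-th eigenvalue of $a_h^0$ restricted to $Z$, which for $i=\dim V_h^c+1$ is \emph{finite}, contradicting the theorem. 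You should be aware that the paper's own proof hides exactly the same defect: it claims that a nonconforming function satisfies $\sum_{e\in\cE_h}\frac1{|e|}\parallel[u^{nc}]\parallel_{0,e}^2\neq0$, a sum over \emph{all} faces including boundary ones, whereas the penalty in \eqref{ah} runs only over $\cE_h^I$, so that nonzero sum does not make the penalty nonzero. Both proofs, and the theorem itself, become correct only if the penalty term is understood to include boundary faces with $[v]:=v$ there; so you should either prove the statement for that modified bilinear form, or restate the conclusion with $V_h^c$ replaced by $Z$ and $\lambda_{i,h}^{P_1}$ replaced by the eigenvalues of the reduced problem on $Z$.
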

\begin{proof}
Define the sets $\mathcal{V}_k=\{V_{k}\subset V_h: \text{dim} V_{k}=k\}, \mathcal{V}_k^c=\{V_{k}\subset V_h^c:\text{dim} V_{k}=k\}$, $\mathcal{V}_k^{nc}=\{V_{k}\nsubseteq V_h^c: \text{dim} V_{k}=k\}$. Then, the set $\mathcal{V}_k$ has the following decomposition:
\begin{equation}\label{cncdecom}
\mathcal{V}_k=\mathcal{V}_k^c \cup \mathcal{V}_k^{nc}.
\end{equation}
Due to the discrete minimum-maximum principle,
\begin{equation}\label{minmaxdecomposition}
\begin{split}
\lambda_{k,h}^{\gamma}&=\min_{V_k\in \mathcal{V}_k} \max_{v\in V_k, v\neq 0}\frac{a_h^{\gamma}(v,v)}{(v,v)}\\
&=\min \big(\min_{V_k\in \mathcal{V}_{k}^c}\max_{v\in V_k, v\neq 0}\frac{a_h^{\gamma}(v,v)}{(v,v)},\min_{V_k\in \mathcal{V}_{k}^{nc}}\max_{v\in V_k, v\neq 0}\frac{a_h^{\gamma}(v,v)}{(v,v)}\big).
\end{split}
\end{equation}

For any $V_{k}\in \mathcal{V}_{k}^{nc}$, there exist nonconforming functions in it, say $u^{nc}$. This implies that
$$\sum_{e\in \cE_h}\frac{1}{|e|}\parallel [u^{nc}]\parallel_{0,e}^2 \neq 0,$$
Therefore, if $\gamma\rightarrow +\infty$,
$$\max_{u\in V_k}\frac{a_h^{\gamma}(u,u)}{(u,u)}\geq \frac{a_h^{\gamma}(u^{nc},u^{nc})}{(u^{nc},u^{nc})}\rightarrow +\infty .$$
As a result,
\begin{equation}\label{ncinfty}
\min_{V_k\in \mathcal{V}_{k,h}^{nc}}\max_{v\in V_k, v\neq 0}\frac{a_h^{\gamma}(v,v)}{(v,v)} \rightarrow +\infty .
\end{equation}

On the other hand, for any $V_{k}\in \mathcal{V}_{k}^c$,
$$\sum_{e\in \cE_h^I}\frac{1}{|e|}\parallel [u^{c}]\parallel_{0,e}^2=0, \forall u^c\in V_{k,h},$$
thus, $\max_{u\in V_k, u\neq 0}\frac{a_h^{\gamma}(u,u)}{(u,u)}$ is a constant function with respect to $\gamma$. This is to say that
\begin{equation}\label{cinfty}
\min_{V_k\in \mathcal{V}_{k}^c}\max_{v\in V_k, v\neq 0}\frac{a_h^{\gamma}(v,v)}{(v,v)} \equiv \lambda_{k,h}^{P_1}, \forall \gamma>0 .
\end{equation}

Substituting \eqref{ncinfty}, \eqref{cinfty} into \eqref{minmaxdecomposition}, if $k\leq \text{ dim }V_h^c$,  then $\mathcal{V}_k^c \neq \varnothing$,
$$
\lim_{\gamma\rightarrow +\infty} \lambda_{i,h}^{\gamma}=\lambda_{i,h}^{P_1},
$$
whereas, if $k>\text{ dim }V_h^c$, $\mathcal{V}_k^c = \varnothing$, thus
$$
\lim_{\gamma\rightarrow +\infty} \lambda_{i,h}^{\gamma}=+\infty .
$$
Thus the proof is completed.
\end{proof}

As is known, when $\gamma=0$, the PCR element method produces lower bounds, namely $\lambda_{k,h}^0<\lambda_k$, while, when $\gamma\rightarrow +\infty$, according to Theorem \ref{infityconforming}, $\lambda^{\gamma}_k$ tends to be an upper bound of $\lambda_k$, namely, $\lambda^{\gamma}_{k,h}>\lambda_k$. Then, Theorem \ref{monotone} indicates that for any given eigenvalue $\lambda_k$, there must exists a corresponding penalty parameter $\gamma_k^{\ast}$ that satisfies
$$\lambda_{k,h}^{\gamma_k^{\ast}}=\lambda_k.$$

Next, we consider the continuity and coercivity of the bilinear form $a_h^{\gamma}(\cdot,\cdot)$. A standard argument for nonconforming finite element methods, see, for instance, \cite{BrennerScott1996,ShiWang}, proves that the CR element space $V_h$ and bilinear form $a_h^{\gamma}(\cdot ,\ \cdot)$ have the following properties.
	\begin{itemize}
		\item[(H1)] $| \cdot |_h\ :=\ a_h^{\gamma}(\cdot,\ \cdot)^{1/2}$ is a norm over the discrete space $V_h$;
		\item[(H2)] Suppose $v\ \in\ V\cap H^2(\Om)$ , then $$\inf_{v_h\in V_h} |v\ -\ v_h|_{1,h} \ \lesssim h|v|_{2,\Om};$$
$$ \sup_{0\neq v_h\in V_h}\frac{|a_h^{\gamma}(v,v_h)-a(v,v_h)|}{|v_h|_{h}^2}\ \lesssim h |v|_{2,\Om}.$$		
	\end{itemize}
	We have the error estimate in Lemma \ref{esti0}, we refer to \cite{HuHuangShen2014} for details on this result.
	\begin{Lm}\label{esti0}
		Suppose $(\lambda, u)$ is the solution to problem \eqref{generaleq01}-\eqref{generaleq02}, and $(\lambda_h^{\gamma},u_h^{\gamma})$ is the FEM solution to problem \eqref{disEq}, there exists the following estimate for eigenvalues $\lambda$ and their corresponding functions $u$ :
		$$|\lambda -  \lambda_h^{\gamma}|\  +\ \parallel u - u_h^{\gamma}\parallel_{0,\Om}\ +\ h|u -u_h^{\gamma}|_{h}\ \lesssim h^2|u|_{2,\Om}$$
provided that $u\in H^2(\Om)\cap H^1_0(\Om)$.
	\end{Lm}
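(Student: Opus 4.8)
The plan is to treat this as a standard a priori error estimate for a nonconforming finite element approximation of a self-adjoint elliptic eigenvalue problem, reducing the eigenvalue estimate to an approximation result for the associated source (boundary value) problem. The hypotheses (H1) and (H2) are exactly the ingredients a textbook nonconforming analysis requires, so the proof should be a matter of assembling the classical machinery rather than inventing anything new.

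First I would establish the companion \emph{source problem} estimate. Given the exact eigenpair $(\lambda,u)$, define the discrete solution $w_h\in V_h$ by $a_h^\gamma(w_h,v_h)=\lambda(u,v_h)$ for all $v_h\in V_h$, i.e.\ the Galerkin projection of the data $\lambda u$. A Strang-type lemma then controls $|u-w_h|_h$ by the sum of a best-approximation term and a consistency (nonconformity) term:
\be
|u-w_h|_h \;\lesssim\; \inf_{v_h\in V_h}|u-v_h|_h \;+\; \sup_{0\neq v_h\in V_h}\frac{|a_h^\gamma(u,v_h)-a(u,v_h)|}{|v_h|_h}. \nn
\ee
Both terms on the right are bounded by $h|u|_{2,\Om}$ directly from (H2), giving $|u-w_h|_h\lesssim h|u|_{2,\Om}$. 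Next I would use a duality (Aubin--Nitsche) argument, exploiting the $H^2$-regularity of the dual problem on the polyhedral domain, to upgrade this to the $L^2$ estimate $\|u-w_h\|_{0,\Om}\lesssim h^2|u|_{2,\Om}$; the extra power of $h$ comes from pairing the primal error against the solution of the adjoint source problem and again invoking (H2) for both the conforming-defect and the nonconforming-consistency contributions.

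With the source-problem estimates in hand, I would invoke the abstract spectral approximation theory for variationally posed eigenvalue problems (in the spirit of Babu\v{s}ka--Osborn). The key algebraic identity expresses the eigenvalue error through the Rayleigh quotient: for the discrete eigenfunction $u_h^\gamma$ normalized in $L^2$,
\be
\lambda - \lambda_h^\gamma \;=\; |u-u_h^\gamma|_h^2 \;-\; \lambda\|u-u_h^\gamma\|_{0,\Om}^2 \;+\;(\text{consistency terms}), \nn
\ee
so that $|\lambda-\lambda_h^\gamma|\lesssim |u-u_h^\gamma|_h^2 + \|u-u_h^\gamma\|_{0,\Om}^2$ plus the nonconforming defect, and the eigenvalue error inherits the \emph{square} of the energy rate, namely $h^2|u|_{2,\Om}^2$; since $|u|_{2,\Om}$ is fixed this is $O(h^2)$. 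Combining the three pieces --- the $h^2$ eigenvalue bound, the $h^2$ $L^2$ eigenfunction bound, and the $h$ energy bound multiplied by the leading $h$ --- yields the stated composite inequality.

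I expect the main obstacle to be the consistency (nonconforming) terms that appear in both the duality argument and the Rayleigh-quotient identity: because $V_h\not\subset V$ and because the penalty term renders $a_h^\gamma$ different from $a$ on $V$, one must carefully track the defect $a_h^\gamma(u,v_h)-a(u,v_h)$ and verify that the penalty contribution $\gamma\sum_e |e|^{-(2-d/2)}\int_e[u][v_h]\,ds$ does not degrade the rate. The saving grace is that the exact eigenfunction $u$ is continuous, so $[u]=0$ on every interior face and the penalty term is \emph{inactive} on $u$; thus the penalty only enters through $|v_h|_h$ in the denominators, and the whole consistency analysis collapses to the classical Crouzeix--Raviart estimate already encoded in (H2). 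Since the result is explicitly attributed to \cite{HuHuangShen2014}, I would lean on that reference for the $\gamma$-uniformity of the hidden constants rather than re-deriving the penalty bookkeeping in full.
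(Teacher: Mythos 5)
You cannot really be checked against the paper's own argument, because the paper has none: Lemma \ref{esti0} is stated with the remark ``we refer to \cite{HuHuangShen2014} for details on this result,'' so the paper's proof is a citation, preceded only by the hypotheses (H1)--(H2). Your outline --- a Strang-type bound for the associated source problem via (H1)--(H2), an Aubin--Nitsche duality step for the $L^2$ estimate, and Babu\v{s}ka--Osborn spectral theory plus the Rayleigh-quotient identity for the eigenvalue error --- is precisely the standard machinery that the cited reference runs, and your structural observation that $[u]=0$ on interior faces (so the penalty never acts on the exact eigenfunction, and a conforming competitor keeps the best-approximation term under control uniformly in $\gamma$) is the right key point; you also tacitly correct the paper's typo in (H2), whose denominator should read $|v_h|_h$ rather than $|v_h|_h^2$. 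So in approach you agree with the paper, to the extent that the paper can be said to have an approach.

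Two steps of your sketch are nonetheless under-specified in ways that matter if the proof is actually written out. First, the duality step: $H^2$ regularity of the dual source problem is a property of the domain (convexity), not a consequence of the hypothesis $u\in H^2(\Om)\cap H^1_0(\Om)$; your phrase ``exploiting the $H^2$-regularity of the dual problem on the polyhedral domain'' silently assumes what in fact fails on non-convex polyhedra such as the L-shaped domain of Section \ref{sec:Lshape}, where the $L^2$ rate, and hence the stated bound, degrades. Second, the consistency term in your Rayleigh-quotient identity: that term is $2\bigl(a_h^\gamma(u,u_h^\gamma)-\lambda(u,u_h^\gamma)\bigr)$, and estimating it by (H2) with $v_h=u_h^\gamma$ gives only $h\,|u|_{2,\Om}\,|u_h^\gamma|_h=O(h)$, which would destroy the claimed $O(h^2)$ eigenvalue rate. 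The extra idea --- integrate by parts elementwise, subtract the face means of $a\,\partial u/\partial n$ (legitimate because CR jumps have zero face averages), and replace $[u_h^\gamma]$ by $[u_h^\gamma-u]$ so that the term is bounded by $h\,|u|_{2,\Om}\,|u-u_h^\gamma|_h\lesssim h^2|u|_{2,\Om}^2$ --- is not ``already encoded in (H2)''; it is exactly the step that makes nonconforming eigenvalue estimates quadratic, and it (or its abstract analogue) has to be imported explicitly from \cite{HuHuangShen2014}. Neither point breaks your architecture, but both must be filled in before the proof closes.
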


\section{Algorithm for Quasi-Optimal Penalty Parameters}\label{sectionalg}
The penalty parameter plays an important role in the performance of the PCR element method. In this section, we design an algorithm to find a quasi-optimal penalty parameter that the corresponding PCR element method produces upper bounds of some eigenvalues and lower bounds of the others, namely, the eigenvalues by the corresponding PCR element method are in a "chaos" state. The main idea is to find an interval $[\gamma_l, \gamma_u]$, such that the approximate eigenvalues obtained by the PCR element methods with $\gamma_l$ and $\gamma_u$ as the penalty parameters are lower bounds and upper bounds of eigenvalues, respectively. Then an application of a bisection-type method is able to achieve a quasi-optimal penalty parameter $\gamma^{\ast}$.

For simplicity of presentation,  we classify penalty parameters into three types, denoted by Type 1, Type 2 and Type 3.  For a   parameter of Type 1, most (up to some ratio to be specified later) of the approximate eigenvalues by the corresponding PCR element method are upper bounds of the exact ones.  For a   parameter of Type 2, most (up to some ratio to be specified later) of the approximate eigenvalues by the corresponding PCR element method are lower bounds of the exact ones.  Type 3 is the remaining part, namely, for any   parameter of Type 3,  the approximate eigenvalues by the corresponding PCR element method are in a "chaos" state.

Before we illustrate how to tell to which type a specialized penalty parameter belongs, we first introduce some further notation. We arrange the eigenvalues of the eigenvalue problem \eqref{conEq} as
\begin{equation}\label{exactseq}
0<\lambda_1\leq\lambda_2\leq \lambda_3\leq \nearrow +\infty .
\end{equation}
Let $\{\cT_i\}_{i=1}^k$ be a shape regular family of conforming simplicial triangulations of the computational domain ¦¸, obtained by successive quasi-uniform refinement of the initial mesh $\cT_1$.
Denote the total number of eigenvalues on $\cT_i$ by $N_i$, we compute the first $M_i$ eigenvalues of the $N_i$ eigenvalues. Then, we define the ratio
\be\label{etadef}
\eta=M_i/N_i.
\ee
In the paper,  we keep the ratio $\eta$ fixed on  successive quasi-uniform refinement $\cT_i$ of the initial mesh $\cT_1$.

Denote the approximation to $\lambda_j$ on $\cT_i$ by $\lambda_{i,j}^{\gamma}, i=1,2,...,k$, sequences
$$ \wedge_j^{\gamma}=\{ \lambda_{1,j}^{\gamma},\ \lambda_{2,j}^{\gamma}, ...\ ,\lambda_{k,j}^{\gamma}\},$$
and the difference of the approximations to $\lambda_j$ on the meshes $\cT_i, \cT_{i+1}$ by
\[D_{ij}=\begin{cases}
        \lambda_{i+1,j}^{\gamma}-\lambda_{ij}^{\gamma}& j=1,...,M_i\\
        0 & j>M_i
\end{cases}.\]
If $D_{ij}>0$, the approximation to $\lambda_j$  increases from $\cT_i$ to $\cT_{i+1}$, otherwise, the approximation to $\lambda_j$ deceases. This motivates us to define the following parameter
\be\label{betadef}
\beta (\gamma,\eta)=\frac{\sum_{i=1}^{k-1}\sum_{j=1}^{M_i} sign(D_{ij})}{\sum_{i=1}^{k-1}M_i}=\frac{\sum_{i=1}^{k-1}\sum_{j=1}^{\eta N_i} sign(D_{ij})}{\sum_{i=1}^{k-1}\eta N_i}.
\ee
The range of $\beta (\gamma,\eta)$ is $[-1,1]$. When all sequences $\wedge_j^{\gamma},j=1,...,M_i$ are strictly monotonically increasing, $\beta (\gamma,\eta)=1$; when all sequences are strictly monotonically decreasing, $\beta(\gamma,\eta)=-1$; otherwise, $\beta (\gamma,\eta)\in (-1,1)$. Thus,  the parameter $\beta(\gamma,\eta)$ measures  the monotonous properties of the first $\eta$ percent of the discrete eigenvalues  on the two successive  meshes. Therefore,  the more  the approximate eigenvalues are in the state of  ``chaos", the closer $\beta (\gamma,\eta)$ is to zero.  Thus,  to achieve high accuracy for a large amount of eigenvalues is to design a penalty parameter such that the corresponding parameter $\beta(\gamma,\eta)$ is close to zero.

We introduce a criterion $\textbf{tol}$ to classify penalty parameters into three types. If $\beta(\gamma,\eta)\leq -\textbf{tol}$, we classify $\gamma$ to Type 1, namely, most of the approximate eigenvalues by the corresponding PCR element method are upper bounds of the exact ones; if $\beta(\gamma,\eta)\geq \textbf{tol}$, we classify $\gamma$ to Type 2, namely, most of the corresponding approximate eigenvalues are lower bounds; otherwise, we classify it to Type 3, namely, the approximate eigenvalues are in a state of "chaos". For a given criterion $\textbf{tol}$, we can use the resulted $\beta(\gamma,\eta)$ to tell which type the penalty parameter $\gamma$ belongs to.

In order to find a penalty parameter $\gamma$ such that the corresponding $\beta(\gamma,\eta)$ is near to zero, we need to select an interval $[\gamma_l,\gamma_u]$ so that $\gamma_l$ belongs to Type 2 and $\gamma_u$ belongs to Type 1. Since  the PCR element method produces lower bounds of eigenvalues when $\gamma=0$ (Type 2),  we only need to choose a penalty parameter $\gamma_u$ which belongs to Type 1. To this end, we make an initial guess $\gamma_1$ of $\gamma_u$, and apply the following Algorithm \ref{alg:gammaguess} to obtain a penalty parameter of Type 1 or a sequence of parameters.

\begin{algorithm}[htb!]
\caption{Select a  penalty parameter of Type 1}
\label{alg:gammaguess}
\begin{algorithmic}[1]
\Require Given an initial guess $\gamma_1$, a positive integer $L\geq 2$, and a multiplicative constant $\rho>1$ ;
\State Let  $i=1;$
\While {$i\leq L$}
\If {$\gamma_i$ belongs to Type 2}
\State {  $ \gamma_{i+1}=\rho\gamma_i,\ i=i+1;$}
\Else
\State {$\gamma_i\rightarrow \hat{\gamma}_u$, break;}
\EndIf
\EndWhile
\Ensure  If $i\leq L$, output $\hat{\gamma}_u$; otherwise, output $\gamma_i, i=1,\cdots,L$.
\end{algorithmic}
\end{algorithm}

If the output $\hat{\gamma}_u$ from Algorithm \ref{alg:gammaguess} is of  Type 1,  we take it as the $\gamma_u$; if it is of  Type 3, we take it as a desirable quasi-optimal penalty parameter $\gamma^{\ast}$ directly. For the  remaining case, we get a sequence of penalty parameters of  Type 2. In such a case, we denote $\gamma_0=0$ and use $\lambda_{k-1,j}^{\gamma_i}, \lambda_{k,j}^{\gamma_i}, i=0, \cdots, L$, which have been computed in Algorithm \ref{alg:gammaguess}, to define discrete average relative error
$$
\Delta\lambda(\gamma_i)=\frac{1}{M_{k-1}}\sum_{j=1}^{M_{k-1}}\frac{\big |\lambda_{k,j}^{\gamma_i}-\lambda_{k-1,j}^{\gamma_i}\big |}{\lambda_{k,j}^{\gamma_i}},\ i=0,1,...,L.
$$
We fit $ \Delta\lambda(\gamma)$ with a quadratic function with respect to the penalty parameter $\gamma$ by  the least square method. It is observed from the numerical results in Table \ref{tab:15}-\ref{tab:60} below that when the penalty parameter is fixed, the average relative error of the first $\eta$ percent of the approximate eigenvalues by the PCR element method is almost a constant as the mesh varies ($\eta$ is defined by \eqref{etadef}). Therefore, it is reasonable to fit the discrete average relative error by a function of penalty parameters.

If the penalty parameter $\gamma$ is of Type 1, $\Delta \lambda (\gamma)$ should be negative; if it is of Type 2,  $\Delta \lambda (\gamma)$ should be positive. In the theory, the fitted quadratic function should be monotonically decreasing on $[0,+\infty )$ and has a positive zero.  However, in numerical tests, it is possible that there does not exist a zero  for
such a quadratic function. Thus, in our method,  if there exist(s)  zero(s),  we take the bigger one as $\hat{\gamma}$; otherwise we denote the zero of the tangent at $\gamma_i$ by $\hat{\gamma_i}$, and take the maximum of $\hat{\gamma_i}, i=0,\cdots,L$ as $\hat{\gamma}$.

If the parameter $\hat{\gamma}$ is of  Type 1, then we take $\gamma_u=\hat{\gamma}$; otherwise it is of Type 2 or Type 3. For the latter case,  the eigenvalues by the PCR element method with penalty parameter $\hat{\gamma}$ are  more accurate than those produced by the original CR element method, which motivate us to  select it as a desirable penalty parameter $\gamma^{\ast}$.

In the end, we  use a bisection-type method in Algorithm \ref{alg:bisection} to obtain a desirable penalty parameter $\hat{\gamma}^{\ast}$ from an interval $[\gamma_l,\gamma_u]$.
\begin{algorithm}[htb!]
\caption{Select a quasi-optimal penalty parameter}
\label{alg:bisection}
\begin{algorithmic}[1]
\Require Given an interval $[\gamma_l, \gamma_u]$, where $\gamma_l$ belongs to Type 2 and $\gamma_u$ belongs to Type 1, and a stopping criterion $ \epsilon$;
\While {$|\gamma_u-\gamma_l|> \epsilon$}
\State $(\gamma_u+\gamma_l)/2\rightarrow \hat{\gamma}^{\ast}$;
\If {$\hat{\gamma^{\ast}}$ belongs to Type 3}
\State break;
\Else\If {$\hat{\gamma^{\ast}}$ belongs to Type 2} {$ \hat{\gamma}^{\ast}\rightarrow \gamma_l$;}
\Else {  $ \hat{\gamma^{\ast}}\rightarrow \gamma_u$;}
\EndIf
\EndIf
\EndWhile
\Ensure  A quasi-optimal penalty $\hat{\gamma}^{\ast}$.
\end{algorithmic}
\end{algorithm}

To get a better penalty parameter, we need to improve Algorithm \ref{alg:bisection}. First, we use the parameter $\hat{\gamma}^{\ast}$ from the Algorithm \ref{alg:bisection} to divide the interval $[\gamma_l,\gamma_u]$ into two parts $[\gamma_l, \hat{\gamma}^{\ast}]$ and $[\hat{\gamma}^{\ast},\gamma_u]$. Second, we use the bisection-type method to $[\hat{\gamma}^{\ast},\gamma_u]$ to obtain the smallest penalty parameter which belongs to Type 1, denoted by $\gamma_u^{\ast}$, and to $[\gamma_l, \hat{\gamma}^{\ast}]$ to obtain the largest penalty parameter which belongs to Type 2, denoted by $\gamma_l^{\ast}$. In the end, we take $\gamma^{\ast}=\frac{\gamma_l^{\ast}+\gamma_u^{\ast}}{2}$ as the final quasi-optimal penalty parameter.

We have to stress that, as is shown by the numerical tests in Section \ref{sec:numericaltest},  the  quasi-optimal penalty parameter $\gamma^\ast$ computed by the algorithm herein is robust and insensitive with respect to these meshes $\cT_1,\cdots,\cT_k$ when they are not that coarse, the criterion $\textbf{tol}$ and the initial guesses $\gamma_i, i=0,\cdots,L$.

\section{Numerical Tests}\label{sec:numericaltest}
In this section, four numerical results are presented to illustrate the performance of the PCR element method for eigenvalue problems of second order elliptic operators. The first three examples are for 2-dimension, the last example is for 3-dimension. In the first two examples, eigenvalue problems of the Laplacian operator are considered, they are carried out sequentially on a square domain $\Om = (0,1)^2$, and a L-shaped domain $\Om=(-1,1)^2/[0,1)\times (-1,0]$. The third one is carried out on a square domain $\Om = (0,1)^2$, solving eigenvalues of a general second order elliptic operator. The last one is carried out on a cubic domain $\Om=(0,1)^3$ for eigenvalues of the Laplacian operator.

The sequence of exact eigenvalues \eqref{exactseq} is defined in Section \ref{sectionalg}, correspondingly, the sequence of numerical eigenvalues is denoted by:
\be
0\ <\ \lambda_{h,1}\ \leq \lambda_{h,2}\ \leq \ \lambda_{h,3} \ \leq \ldots \nearrow +\infty .
\ee
The relative error of the $i$-th approximate eigenvalue on $\cT_k$ is denoted by $$e_{k,i}=\frac{\lambda_i -\ \lambda_{k,i}}{\lambda_i}$$
and the average relative error of the first $\eta$ percent of eigenvalues by
\be\label{Edef}
E_{\eta,k}=\frac{1}{ceil(M_k)}\sum_{i=1}^{ceil(M_k)} |e_{k,i}|
\ee
where $M_k$ is the number of the first $\eta$ percent of eigenvalues on mesh $\cT_k$ and $ceil(M_k)$ denotes the smallest integer larger than $M_k$.

\subsection{}\label{sec:2dtest}

We consider the following eigenvalue problem
\begin{equation}
\begin{split}
-\Delta u\ &=\ \lambda u\  \text{ in}\ \Om ,\\
u\ &=\ 0\  \text{ on}\ \partial\Om ,
\end{split}
\end{equation}
on domain $\Om =(0,1)^2$, and partition the domain by uniform triangles. In the computation, the level one mesh consists of two right triangles, obtained by cutting the unit square with a north-east line. Each mesh is refined into a half sized mesh uniformly, to get a higher level mesh. The exact eigenvalues are known as
$$\lambda=(m^2+n^2)\pi^2 ,\ m,\ n\ \text{are positive integers}.$$

\begin{figure}[!ht]
\centering
\includegraphics[width=13cm,height=12cm]{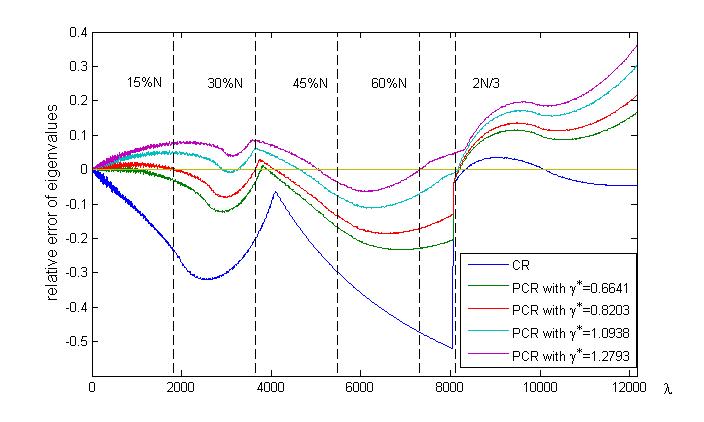}
\caption{\footnotesize{Comparison of the relative errors $\frac{\lambda_h-\lambda}{\lambda}$ by the CR element method and the PCR element method with $\gamma^\ast$ on $\cT_7$. }}
\label{fig:alleig}
\end{figure}

In our computation, we consider four cases where the ratios of eigenvalues we investigate are $15\%$, $30\%$, $45\%$, and $60\%$, denoted by Case 1, Case 2, Case 3 and Case 4, respectively. To select quasi-optimal penalty parameters for these four cases, we let $\eta=10\%$, $20\%$, $30\%$, and $40\%$ in \eqref{betadef}, respectively. In the improved Algorithm \ref{alg:bisection}, we take $k=5$, $\textbf{tol}=0.8$, $\epsilon=0.01$.   In particular, this yields $\gamma^\ast=0.6641$, $0.8203$, $1.0938$, $1.2793$ for these four cases, respectively.

We depict the relative errors by the CR element method and the PCR element methods with $\gamma^\ast=0.6641$, $0.8203$, $1.0938$, $1.2793$ in Figure \ref{fig:alleig}, where the five vertical lines are $x=15\%N$, $30\%N$, $45\%N$, $60\%N$, $2N/3$ with $N=12160$ the total number of eigenvalues on mesh $\cT_7$. As is shown in Figure \ref{fig:alleig}, the relative errors of the approximate eigenvalues by the PCR element methods in these four cases are smaller than $ 3.4\%$, $8.2\%$, $ 7.6\%$ and $ 8.8\%$, respectively, improved from $23\%$, $31\%$, $31\%$ and $47\%$ of those by the CR element method, respectively. As a more precise comparison, the relative error of the $820$-th eigenvalues by the CR element method is almost $10\%$, which is bigger than those of all the first $7296$ eigenvalues by the PCR element method on the aforementioned mesh.

\begin{figure}[!ht]
\centering
\subfigure
{\includegraphics[width=6cm,height=6cm]{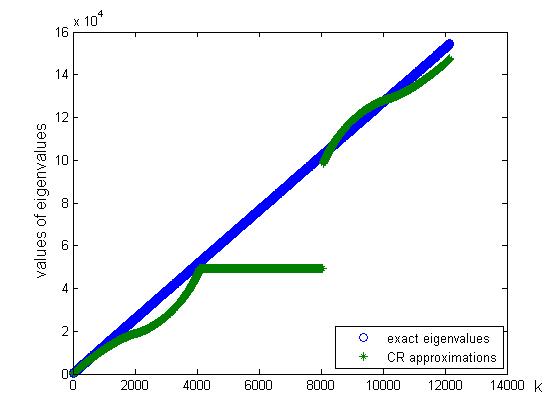}}
\subfigure
{\includegraphics[width=6cm,height=6cm]{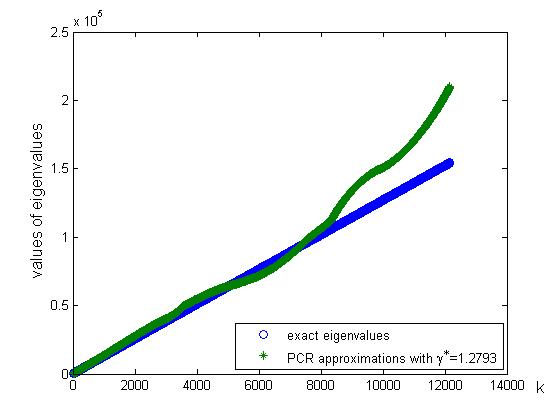}}
\caption{\footnotesize{Comparison of the approximate eigenvalues by the CR element method and the PCR element method with $\gamma^\ast$ on $\cT_7$.}}
\label{fig:exactcompare}
\end{figure}

We plot the exact eigenvalues and the approximate ones by the CR element method and the PCR element method with $\gamma^\ast=1.2793$ in Figure \ref{fig:exactcompare}. Note that the second third of the approximate eigenvalues by the CR element method are almost the same. This fact brings about remarkable errors for these eigenvalues, while the PCR element method with $\gamma^\ast=1.2793$ avoids the emergence of this situation, and approximates the eigenvalues with high accuracy.

\begin{table}[htb!]
\footnotesize
    \caption{\footnotesize{Average relative errors of the first $\eta=15\%$ eigenvalues by the CR element method, conforming linear element method and the PCR element method with different penalty parameters on each mesh. $M_i$ is the number of the first $15\%$ eigenvalues by the CR and the PCR element method on $\cT_i$. }}
    	\centering
	\begin{tabular}{c|ccccccc}
		\hline
		$h$   & $1$ & $\frac{1}{2}$ &  $\frac{1}{2^2}$ &  $\frac{1}{2^3}$ & $\frac{1}{2^4}$&  $\frac{1}{2^5}$ & $\frac{1}{2^6}$\\\hline
        $M_i$ & 1 & 2 & 6& 27&111 & 452&1824 \\\hline
		$CR$   & 0.2159 & 0.2273 & 0.1306 & 0.1238 & 0.1149 & 0.1118 & 0.1098 \\\hline
        $P_1$   &-&0.6211 & 0.2131 & 0.1454 & 0.1138 &0.1059  & 0.1016\\\hline
		$\gamma=0.5$ & 0.3172 & 0.1496 & 0.0416 & 0.0335 & 0.0286 & 0.0269 & 0.0259 \\\hline
        $\gamma=0.6$  & 0.3374 & 0.1344 & 0.0286 & 0.0219 & 0.0178 & 0.0156 & 0.0143 \\\hline
        $\gamma^{\ast}=0.6641$  & 0.3504 & 0.1247& 0.0229&0.0161&0.0126&0.0102&0.0085 \\\hline
        $\gamma=0.7$  & 0.3577 & 0.1193 & 0.0199 & 0.0140 & 0.0108 & 0.0081 & 0.0064 \\\hline
        $\gamma=0.8$  & 0.3780 & 0.1042 & 0.0146 & 0.0121 & 0.0105 & 0.0085 & 0.0082 \\\hline
        $\gamma=0.9$  & 0.3982 & 0.0893 & 0.0121 & 0.0154 & 0.0173 & 0.0174 & 0.0174 \\\hline
	\end{tabular}%
	\label{tab:15}%
\end{table}%

\begin{table}[htb!]
\footnotesize
\caption{\footnotesize{Average relative errors of the first $\eta=30\%$ eigenvalues by the CR element method, conforming linear element method and the PCR element method with different penalty parameters on each mesh. $M_i$ is the number of the first $30\%$ eigenvalues by the CR and the PCR element method on $\cT_i$.}}
	\centering
	\begin{tabular}{c|ccccccc}
		\hline
		$h$   & $1$ & $\frac{1}{2}$ &  $\frac{1}{2^2}$ &  $\frac{1}{2^3}$ & $\frac{1}{2^4}$&  $\frac{1}{2^5}$ & $\frac{1}{2^6}$\\\hline
        $M_i$ & 1 & 3 & 12& 53&221 & 903&3648 \\\hline
		CR  & 0.2159 & 0.2793 & 0.2392 & 0.2181 & 0.2068 & 0.2006 & 0.1973 \\\hline
        $P_1$  &-& 0.6211 & 0.2921 & 0.2313 & 0.2108 &0.2022 & 0.1980\\\hline
        $\gamma=0.7$ & 0.3577 & 0.1316 & 0.0797 & 0.0607 & 0.0497 & 0.0432 & 0.0397 \\\hline
        $\gamma=0.8$  & 0.3780 & 0.1108 & 0.0643 & 0.0470 & 0.0361 & 0.0301 & 0.0273 \\\hline
        $\gamma^{\ast}=0.8203$  &0.3821&0.1066&	0.0620&0.0447&0.0340&	0.0282& 0.0256\\\hline
        $\gamma=0.9$  & 0.3982 & 0.0900 & 0.0538 & 0.0376 & 0.0284 & 0.0236 & 0.0218 \\\hline
        $\gamma=1.0$  & 0.4185 & 0.0734 & 0.0495 & 0.0336 & 0.0266 & 0.0240 & 0.0236 \\\hline
        $\gamma=1.1$  & 0.4388 & 0.0574 & 0.0456 & 0.0348 & 0.0305 & 0.0306 & 0.0316 \\\hline
	\end{tabular}%
    \label{tab:30}%
\end{table}%

\begin{table}[htb!]
\small
\caption{\footnotesize{Average relative errors of the first $\eta=45\%$ eigenvalues by the CR element method, conforming linear element method and the PCR element method with different penalty parameters on each mesh. $M_i$ is the number of the first $45\%$ eigenvalues by the CR and the PCR element method on $\cT_i$. }}
	\centering
	\begin{tabular}{c|ccccccc}
		\hline
	    $h$   & $1$ & $\frac{1}{2}$ &  $\frac{1}{2^2}$ &  $\frac{1}{2^3}$ & $\frac{1}{2^4}$&  $\frac{1}{2^5}$ & $\frac{1}{2^6}$\\\hline
        $M_i$ & 1 & 4 & 28&80 & 332 & 1354&5472 \\\hline
		CR   & 0.2159 & 0.3075 & 0.2559 & 0.2246 & 0.2057 & 0.1957 & 0.1906 \\\hline
        $P_1$   &-& 0.6211 & 0.3919 & 0.3223 & 0.2983 &0.2908 & 0.2877\\\hline
		$\gamma=0.8$ & 0.3780 & 0.1203 & 0.0865 & 0.0625 & 0.0470 & 0.0393 & 0.0366 \\\hline
        $\gamma=0.9$  & 0.3982 & 0.0972 & 0.0724 & 0.0500 & 0.0363 & 0.0312 & 0.0297 \\\hline
        $\gamma=1.0$  & 0.4185 & 0.0771 & 0.0624 & 0.0416 & 0.0311 & 0.0288 & 0.0285 \\\hline
        $\gamma^{\ast}=1.0938$  & 0.4375&	0.0587&0.0531&0.0374&0.0310&0.0314& 0.0324\\\hline
        $\gamma=1.1$  & 0.4388 & 0.0575 & 0.0525 & 0.0373 & 0.0312 & 0.0317 & 0.0329 \\\hline
        $\gamma=1.2$  & 0.4590 & 0.0407 & 0.0428 & 0.0365 & 0.0373 & 0.0409 & 0.0432 \\\hline
	\end{tabular}%
    \label{tab:45}%
\end{table}

\begin{table}[htb!]
\small
\caption{\footnotesize{Average relative errors of the first $\eta=60\%$ eigenvalues by the CR element method, conforming linear element method and the PCR element method with different penalty parameters on each mesh. $M_i$ is the number of the first $60\%$ eigenvalues by the CR and the PCR element method on $\cT_i$. }}
	\centering
	\begin{tabular}{c|ccccccc}
		\hline
		$h$   & $1$ & $\frac{1}{2}$ &  $\frac{1}{2^2}$ &  $\frac{1}{2^3}$ & $\frac{1}{2^4}$&  $\frac{1}{2^5}$ & $\frac{1}{2^6}$\\\hline
        $M_i$ & 1 & 5 & 24& 106&442 & 1805&7296 \\\hline
		CR   & 0.2159 & 0.2514 & 0.2972 & 0.2703 & 0.2538 & 0.2452 & 0.2409 \\\hline
        $P_1$   &-& 0.6211 & 0.4393 & 0.3903 & 0.3694 &0.3633 & 0.3604\\\hline
		$\gamma=0.9$ & 0.3982 & 0.0869 & 0.0904 & 0.0761 & 0.0656 & 0.0615 & 0.0601 \\\hline
        $\gamma=1.0$  & 0.4185 & 0.0725 & 0.0750 & 0.0626 & 0.0547 & 0.0527 & 0.0523 \\\hline
        $\gamma=1.1$  & 0.4388 & 0.0585 & 0.0603 & 0.0521 & 0.0477 & 0.0479 & 0.0486 \\\hline
        $\gamma=1.2$  & 0.4590 & 0.0467 & 0.0487 & 0.0445 & 0.0453 & 0.0478 & 0.0494 \\\hline
        $\gamma^{\ast}=1.2793$  & 0.4751&0.0458&0.0417&0.0425&0.0460&0.0491&0.0508\\\hline
        $\gamma=1.3$  & 0.4793 & 0.0468 & 0.0407 & 0.0424 & 0.0466 & 0.0497 & 0.0513 \\\hline
	\end{tabular}%
    \label{tab:60}%
\end{table}%
We compare the PCR element methods with various penalty parameters with the original CR element method and the conforming linear element method in Table \ref{tab:15} - \ref{tab:60} for the four aforementioned cases respectively. As the mesh size decreases from $1$ to $\frac{1}{2^6}$, the corresponding total numbers of eigenvalues by the CR element method and the PCR element methods are $1 $, $8 $, $40 $, $176 $, $736 $, $3008 $, $12160$. In other words, if the ratio $\eta=60\%$, $1$, $5 $, $24 $, $106 $, $442 $, $1805 $, $7296$ eigenvalues are investigated, respectively. We observe that the CR element method performs better than the conforming linear element method on most cases. In the following, we only compare the PCR element method with the original CR element method.

On the mesh $\cT_7$, for Case 1, the average relative error of the first $1824$ eigenvalues by the PCR element method with the quasi-optimal parameters $\gamma^\ast$ computed by Algorithm \ref{alg:bisection} are smaller than $0.9\%$, improved from $10\%$ of that by the CR element method; for Case 2, the average relative error of the first $3648$ eigenvalues by the PCR element method are smaller than $2.6\%$, improved from $19\%$ of that by the CR element method; for Case 3, the average relative error of the first $5472$ eigenvalues by the PCR element method are smaller than $3.3\%$, improved from $19\%$ of that by the CR element method; and for Case 4, the average relative error of the first $7296$ eigenvalues by the PCR element method are smaller than $5.1\%$, improved from $20\%$ of that by the CR element method. It indicates that for the aforementioned four cases, the average relative error of the eigenvalues by the PCR element method is almost one order of magnitude smaller than that by the CR element method and the conforming linear element method. As is shown in Table \ref{tab:15}- \ref{tab:60}, for a penalty parameter near to the quasi--optimal one, the corresponding PCR element method has similar results.  This means that the PCR element method is in some sense robust with respect to the penalty parameter.

It is observed from Table \ref{tab:15}, when the ratio $\eta$ is fixed, say $\eta=15\%$, the penalty parameters to minimize the average relative errors of the first $\eta$ percent of eigenvalues, on the meshes $\cT_4$, $\cT_5$, $\cT_6$, $\cT_7$, lie in the intervals $[0.7,0.9]$, $[0.7,0.9]$, $[0.6,0.8]$, $[0.6,0.8]$, respectively. This implies that the optimal penalty parameters on different meshes are quite close, which suggests that we only need to compute the quasi-optimal ones on relative coarse meshes. On the other hand, when the mesh is fixed, say $\cT_7$, the optimal penalty parameters to minimize the average relative errors  for Case 1 - 4, lie in the intervals $[0.6,0.8]$, $[0.8,1]$, $[0.9,1.1]$ and $[1,1.2]$, respectively. We can see that the quasi-optimal penalty parameters $\gamma^{\ast}=0.6641$, $0.8203$, $1.0938$, $ 1.2793$ obtained by the improved Algorithm \ref{alg:bisection} either belong or very close to the corresponding intervals, which shows the efficiency of our method.

In our algorithm, the criterion $\textbf{tol}$ is up to our disposal. Table \ref{tab:tolcompare} presents the quasi-optimal penalty parameters computed by the improved Algorithm \ref{alg:bisection} with the fixed ratio $\eta$, say  $10\%$, for various criterions $\textbf{tol}$. It shows that the penalty parameters with various criterions are almost the same, and the average relative error by the PCR element method with the corresponding penalty parameters are also quite similar and close to the minimal one. This indicates that our improved Algorithm \ref{alg:bisection} is robust with respect to the criterion $\textbf{tol}$.

\begin{table}[htbp]
\footnotesize
  \centering
  \caption{\footnotesize{Penalty parameter selected by the Algorithm \ref{alg:bisection} with initial interval $[0,10]$, $k=5$, $\eta=10\%$, $\epsilon=0.01$ and the corresponding iteration numbers and average relative errors of first $15\%$ eigenvalues on $\cT_7$.}}
    \begin{tabular}{c|ccccccccc}
    \hline
     $\textbf{tol}$ & $0.1$ &$0.2$ & $0.3$ & $0.4$ & $0.5$ & $0.6$ & $0.7$ & $0.8$ & $0.9$\\\hline
     $\gamma^{\ast}$& 0.7129 & 0.7129  & 0.7324 & 0.7422 & 0.7227 & 0.6934 & 0.6836& 0.6641 & 0.6936 \\\hline
     $E_{15\%,7}$  & 0.0060 & 0.0060  & 0.0057 & 0.0057 & 0.0058 & 0.0067 & 0.0072& 0.0085 & 0.0067 \\\hline
    \end{tabular}%
  \label{tab:tolcompare}%
\end{table}%

Next, we test the case that all initial guesses are of Type 2. In Table \ref{tab:gammac}, we list six sets of guesses in this case and the corresponding penalty parameter $\hat{\gamma}$ by fitting $\Delta \lambda$ with a quadratic function. In this implementation, we take $\textbf{tol}=0.4$, $k=5$, $\eta=0.15$. It is shown that these penalty parameters $\hat{\gamma}$ of Type 3 are close to the quasi-optimal one, and the corresponding PCR element method also yields high accuracy for the eigenvalues.

\begin{table}[htbp]
\footnotesize
  \centering
  \caption{\footnotesize{For some cases that all initial guesses belong to Type 2, penalty parameter $\hat\gamma$ is selected by fitting the average relative error.}}
    \begin{tabular}{c|cccccc}
    \hline
      set of guesses &$\{0,\ 0.1,\ 0.2\}$&$\{0,\ 0.2,\ 0.4\}$ &$\{0,\ 0.3,\ 0.6\}$ &$\{0,\ 0.4,\ 0.6\}$ &$\{0,\ 0.5 ,\ 0.6\}$  & $\{0.4,\ 0.5,\ 0.6\}$\\\hline
      real roots of $\Delta \lambda$&Yes &Yes &Yes &No & No &No\\\hline
      $\hat{\gamma}$ & 1.9612  &2.1953 &1.3995  &1.7349  &0.8585  &0.8578 \\\hline
      Type of $\hat{\gamma}$& Type 1& Type 1& Type 1& Type 1& Type 3& Type 3\\\hline
      $E_{15\%,7} $& 0.0085&0.0085&0.0085&0.0085&0.0136& 0.0135\\\hline
    \end{tabular}%
  \label{tab:gammac}%
\end{table}%

\subsection{}\label{sec:Lshape}
Next we consider the following eigenvalue problem
\begin{align*}
-\Delta u\ &=\ \lambda u\ \quad \text{  in}\ \Om ,\\
u\ &=\ 0 \qquad \text{ on }\partial \Om ,
\end{align*}
on a L-shaped domain $\Om\ =\ (-1,1)^2/[0,1]\times [-1,0]$. In the computation, the level one mesh is obtained by dividing the domain into three unit squares, each of which is further divided into two triangles. Each mesh is refined into a half sized mesh uniformly, to get a higher level mesh.
\begin{figure}[!ht]
\centering
\subfigure
{\includegraphics[width=6cm,height=6cm]{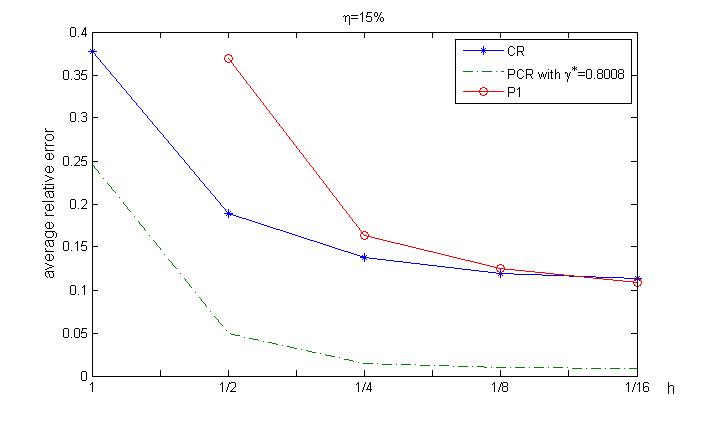}}
\subfigure
{\includegraphics[width=6cm,height=6cm]{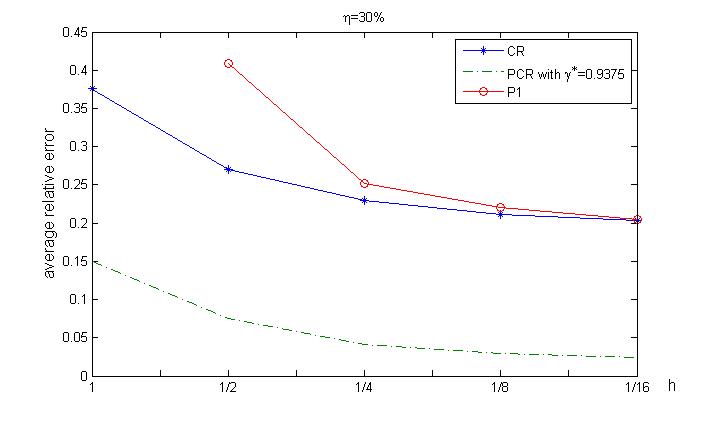}}
\subfigure
{\includegraphics[width=6cm,height=6cm]{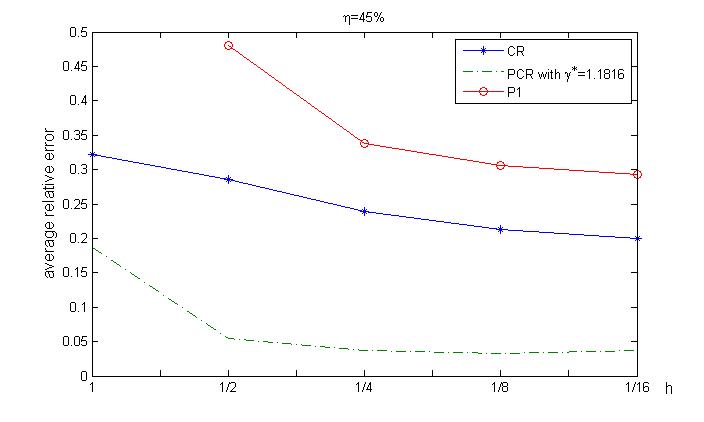}}
\subfigure
{\includegraphics[width=6cm,height=6cm]{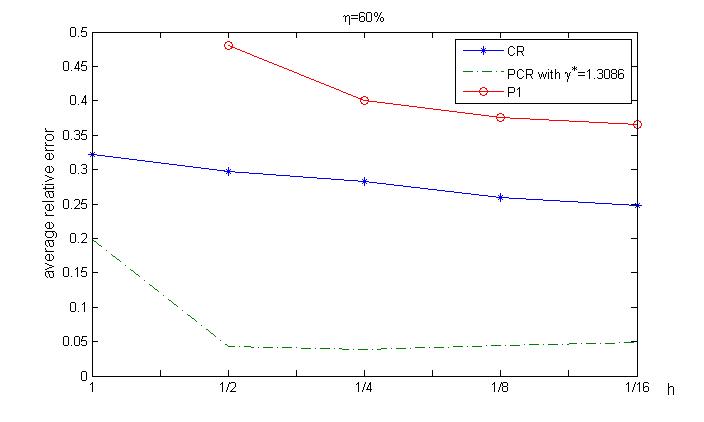}}
\caption{\footnotesize{Average relative errors $E_{\eta,k}$ computed by the CR element method, the conforming linear and the PCR element method with penalty parameters $\gamma^\ast$ on the L-shaped domain. The penalty parameters $\gamma^\ast$ are generated by Algorithm \ref{alg:bisection} with respect to various ratios $\eta$.}}
\label{fig:Lshape}
\end{figure}

Since the exact eigenvalues of this problem are unknown, we apply the high accuracy post processing algorithm proposed in \cite{HuHuangShen2012} to the conforming $P_3$ method and the Weak Element method with $m=3,\ k=5$ on the mesh $\cT_4$, and use the first $23\%$ eigenvalues as the reference eigenvalues.

In our computation, we consider four cases that the ratios of eigenvalues we investigate are $15\%$, $30\%$, $45\%$, $60\%$, denoted by Case 1, Case 2, Case 3 and Case 4, respectively. To select quasi-optimal penalty parameters for these four cases, we let $\eta=10\%$, $20\%$, $30\%$, and $40\%$ in \eqref{betadef}, respectively. In the improved version of Algorithm \ref{alg:bisection}, we take $k=4$, $\textbf{tol}=0.5$, $\epsilon=0.01$. In particular, this yields $\gamma^{\ast}=0.8008$, $0.9375$, $ 1.1816$ and $1.3086$ for these four cases, respectively.

Figure \ref{fig:Lshape} compares the average relative errors by the CR element method, the conforming linear method and the PCR element method with selected penalty parameters $\gamma^{\ast}$ for the four aforementioned cases. Similar to the results in Section \ref{sec:2dtest}, the average relative errors of the approximate eigenvalues by the PCR element method are much better than those by the CR element method and the conforming linear method for all the four cases. In particular, on the fixed mesh $\cT_5$, for Case 1, the average relative error by the PCR element method is smaller than $0.86\%$, a remarkable improvement on $11\%$ by the CR element method; for Case 2, the average relative error by the PCR element method is smaller than $2.42\%$, a big improvement on $20\%$ by the CR element method; for Case 3, the average relative error by the PCR element method is smaller than $3.72\%$, a significant improvement on $20\%$ by the CR element method; for Case 4, the average relative error by the PCR element method is smaller than $4.82\%$, a prominent improvement on $24\%$ by the CR element method.

We plot the reference eigenvalues and the approximate ones by the CR element method and the PCR element method with the penalty parameter $\gamma^{\ast}=1.3086$ on $\cT_4$ in Figure \ref{fig:Lshapecompare}. Similar to the results in Section \ref{sec:2dtest}, the PCR element method with penalty parameter $\gamma^{\ast}=1.3086$ on $\cT_4$ eliminates the emergence that the second third of the approximate eigenvalues by the CR element method are almost the same, therefore, achieves higher accuracy for the first $60\%$ percent of eigenvalues.
\begin{figure}[!ht]
\centering
\subfigure
{\includegraphics[width=6cm,height=6cm]{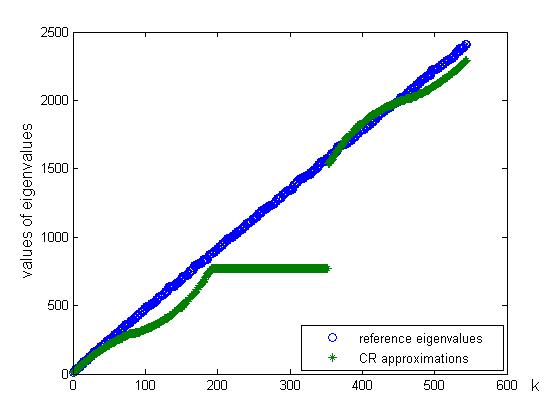}}
\subfigure
{\includegraphics[width=6cm,height=6cm]{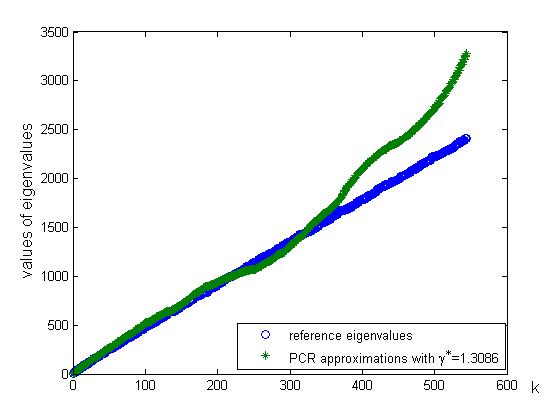}}
\caption{\footnotesize{Comparison of the approximate eigenvalues by the CR element method, the PCR element method with $\gamma^\ast$ with the reference eigenvalues on the L-shaped domain.}}
\label{fig:Lshapecompare}
\end{figure}

\subsection{}
In this experiment, we consider the eigenvalues for a general second order elliptic operator
\begin{equation}\label{numerical:general}
\begin{split}
-\nabla\cdot(a\ \nabla u)\ &=\ \lambda u\ \quad \text{  in}\ \Om ,\\
u\ &=\ 0 \qquad \text{ on }\partial \Om ,
\end{split}
\end{equation}
where $a(x,y)=1+x+y$ and the domain $\Om\ =\ (0,1)^2$ is covered by the uniform triangular mesh in Section \ref{sec:2dtest}.

% Table generated by Excel2LaTeX from sheet 'Sheet1'
\begin{table}[!ht]
\footnotesize
  \centering
  \caption{\footnotesize{The average relative errors of the first $15\%$ percent of the eigenvalues by the CR element method, the conforming linear element method and the PCR element method with $\gamma^\ast=1.2012$.}}
    \begin{tabular}{ccccccc}
    \hline
    $h$   &$1$&  $\frac{1}{2}$ &  $\frac{1}{2^2}$ &  $\frac{1}{2^3}$ & $\frac{1}{2^4}$&  $\frac{1}{2^5}$ \\\hline
    P1    & -       & 0.6801 &  0.2441 &   0.1500 &   0.1191 &  0.1102 \\
    CR    &  0.2601 & 0.2214 &   0.1445 &   0.1375 &  0.1273 &0.1228 \\
    PCR   & 0.3862 & 0.1137 & 0.0263 & 0.0183 & 0.0137 & 0.0109 \\\hline
    \end{tabular}%
  \label{tab:change15}%
\end{table}%

\begin{table}[htbp]
\footnotesize
  \centering
  \caption{\footnotesize{The average relative errors of the first $30\%$ percent of the eigenvalues by the CR element method, the conforming linear element method and the PCR element method with $\gamma^\ast=1.5723$.}}
    \begin{tabular}{ccccccc}
    \hline
    $h$   &$1$&  $\frac{1}{2}$ &  $\frac{1}{2^2}$ &  $\frac{1}{2^3}$ & $\frac{1}{2^4}$&  $\frac{1}{2^5}$ \\\hline
    P1    & -       & 0.6801 &	 0.3686 &  0.2489 &  0.2175 & 0.2091\\
    CR    &  0.2601 & 0.2680 &	 0.2341 &  0.2164 &	 0.2054 & 0.1993 \\
    PCR   & 0.4252 & 0.0978 & 0.0572 & 0.0397 & 0.0301 & 0.0259 \\\hline
    \end{tabular}%
  \label{tab:change30}%
\end{table}%

\begin{table}[htbp]
\footnotesize
  \centering
  \caption{\footnotesize{The average relative errors of the first $45\%$ percent of the eigenvalues by the CR element method, the conforming linear element method and the PCR element method with $\gamma^\ast=2.0117$.}}
    \begin{tabular}{ccccccc}
    \hline
    $h$   &$1$&  $\frac{1}{2}$ &  $\frac{1}{2^2}$ &  $\frac{1}{2^3}$ & $\frac{1}{2^4}$&  $\frac{1}{2^5}$ \\\hline
    P1    & -       & 0.6801 &  0.4650 &0.3444 & 0.3059 & 0.2978\\
    CR    &  0.2601 & 0.2901 &	0.2638 &0.2449 & 0.2329 & 0.2265\\
    PCR   & 0.4713 & 0.0585 & 0.0525 & 0.0361 & 0.0272 & 0.0243\\\hline
    \end{tabular}%
  \label{tab:change45}%
\end{table}%

\begin{table}[htbp]
\footnotesize
  \centering
  \caption{\footnotesize{The average relative errors of the first $60\%$ percent of the eigenvalues by the CR element method, the conforming linear element method and the PCR element method with $\gamma^\ast=2.2852$.}}
    \begin{tabular}{ccccccc}
    \hline
    $h$   &$1$&  $\frac{1}{2}$ &  $\frac{1}{2^2}$ &  $\frac{1}{2^3}$ & $\frac{1}{2^4}$&  $\frac{1}{2^5}$ \\\hline
    P1    & -       & 0.6801 & 0.4650 &	 0.3977 &	0.3764 &  0.3681\\
    CR    &  0.2601 & 0.2573 &   0.2816 &	0.2684 &  0.2585 & 0.2523 \\
    PCR   & 0.5000 & 0.0337 & 0.0410 & 0.0343 & 0.0350 & 0.0356  \\\hline
    \end{tabular}%
  \label{tab:change60}%
\end{table}%

The exact eigenvalues of this problem are still unknown, thus, the reference eigenvalues are obtained in the same way as the test in Section \ref{sec:Lshape}. In this implementation, we consider four cases where the ratios of eigenvalues we investigate are $15\%$, $30\%$, $45\%$, and $60\%$, respectively. To select quasi-optimal penalty parameters for these four cases, we let $\eta=10\%$, $20\%$, $30\%$, and $40\%$ in \eqref{betadef}, respectively. We take $k=5$, $\textbf{tol}=0.5$, $\epsilon=0.01$ in the improved version of Algorithm \ref{alg:bisection}. In particular, this yields $\gamma^{\ast}=1.2012$, $1.5723$, $ 2.0117$ and $2.2852$ for these four cases, respectively.

The resulted average relative errors of eigenvalues with respect to the four aforementioned cases are presented in Table \ref{tab:change15}-\ref{tab:change60}. On the mesh $\cT_5$, the average relative errors by the PCR element method for these four cases are smaller than $0.45\%$, $1.31\%$, $3.11\%$ and $4.70\%$, respectively, which is a remarkable improvement compared to the $12\%$, $19\%$, $22\%$ and $25\%$ by the CR element method, respectively. This implies that for a general second order elliptic problem, the PCR element method still shows great advantage over the CR element method and the conforming linear method when a large amount of eigenvalues are investigated.

\subsection{}\label{sec:3dtest}
In this experiment, we consider the following eigenvalue problem
\begin{align}
-\Delta u\ &=\ \lambda u\ \quad \text{  in}\ \Om ,\\
u\ &=\ 0 \qquad \text{ on }\partial \Om ,
\end{align}
on a 3-dimensional domain $\Om\ =\ (0,1)^3$. To obtain the level one mesh, the domain is divided into six tetrahedrons as showed in Figure \ref{fig:3dtriangulation}. Each mesh is refined into a half sized mesh uniformly, to get a higher level mesh. The exact eigenvalues are known as
$$
\lambda=(m^2+n^2+l^2)\pi^2,\ m,\ n,\ l\ \text{are positive integers}
$$
\begin{figure}[!ht]
\centering
\includegraphics[width=6cm,height=6cm]{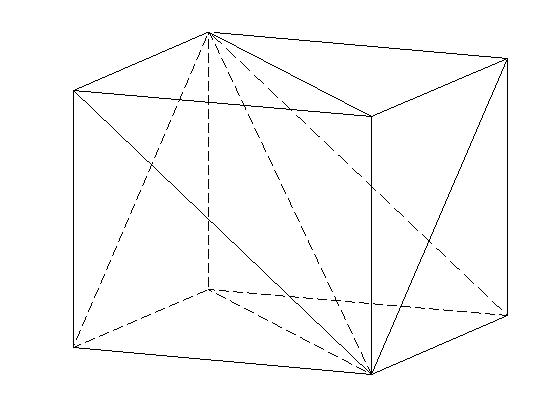}
\caption{\footnotesize{Triangulation in 3-dimension}}
\label{fig:3dtriangulation}
\end{figure}

\begin{figure}[!ht]
\centering
\subfigure
{\includegraphics[width=6cm,height=6cm]{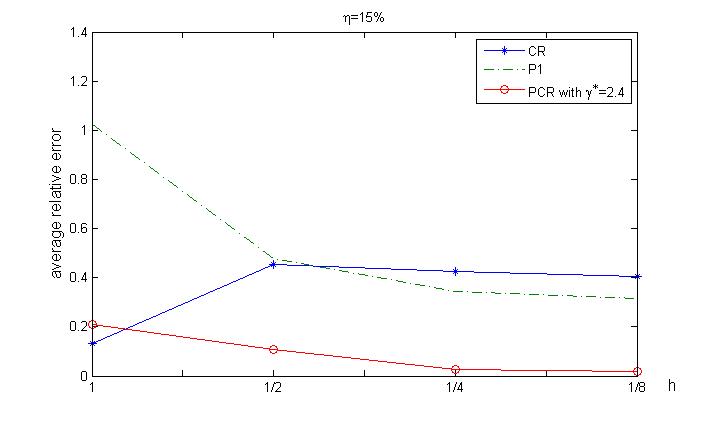}}
\subfigure
{\includegraphics[width=6cm,height=6cm]{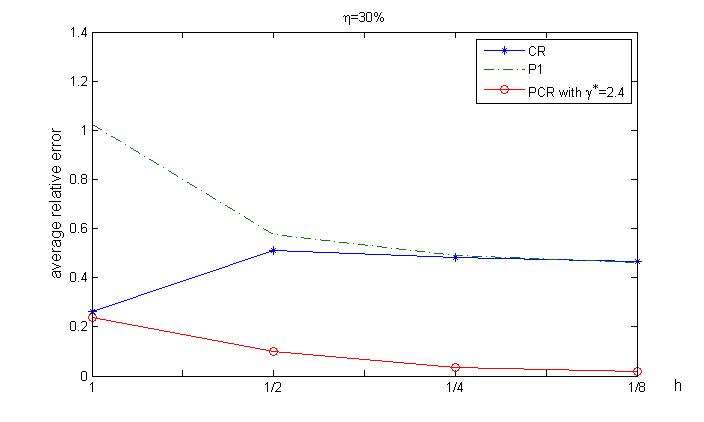}}
\subfigure
{\includegraphics[width=6cm,height=6cm]{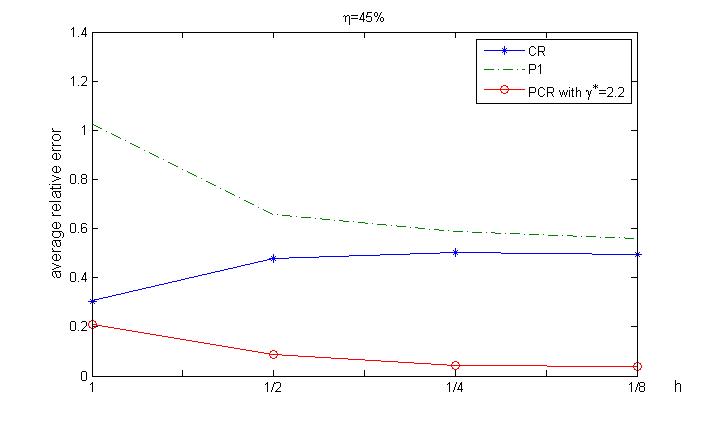}}
\subfigure
{\includegraphics[width=6cm,height=6cm]{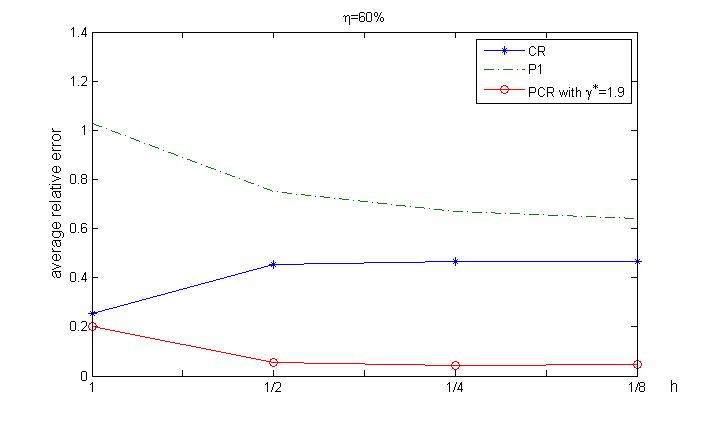}}
\caption{\footnotesize{Average relative errors $E_{\eta,k}$ by the CR element method, the conforming linear element method and the PCR element methods with various penalty parameters on an unit cube $[0,1]^3$.}}
\label{fig:3d}
\end{figure}

In this implementation, we consider four cases where the ratios of eigenvalues we investigate are $15\%$, $30\%$, $45\%$, and $60\%$. For these four cases, we let the penalty parameter be $\gamma^\ast=2.5$, $2.5$, $2.2$ and $2.0$. In Figure \ref{fig:3d}, we compare the average relative errors of the approximate eigenvalues by the CR element method, the conforming linear element method and the PCR element method with penalty parameters $\gamma^\ast$. Similar to the results showed in the two-dimensional numerical examples in Section \ref{sec:2dtest}, with the ratio $\eta$ and mesh fixed, the average relative errors by the PCR element method with the optimal penalty parameters are almost one order of magnitude smaller than the CR element method and the conforming linear element method.

\end{document}